\newtheorem{theorem}{Theorem}
\newtheorem{corollary}{Corollary}
\newtheorem{lemma}{Lemma}
\theoremstyle{remark}
\newtheorem{remark}{Remark}
\def\R{\mathbb{R}}
\def\Q{\mathbb{Q}}
\def\Z{\mathbb{Z}}
\def\N{\mathbb{N}}
\begin{document}

\title{\bf Sums of reciprocals and the three distance theorem}

\author{\sc Victor Beresnevich\footnote{Supported by EPSRC grant EP/J018260/1} \and \sc Nicol Leong}

\date{}

\maketitle

\begin{abstract}
In this paper we investigate the sums of reciprocals to an arithmetic progression taken modulo one, that is sums of $\{n\alpha-\gamma\}^{-1}$, where $\alpha$ and $\gamma$ are real parameters and $\{\,\cdot\,\}$ is the fractional part of a real number. Bounds for these sums have been studied for a long while in connection with various applications. In this paper we develop an alternative technique for obtaining upper bounds for the sums and obtain new efficient and fully explicit results. The technique uses the so-called three distance theorem.
\end{abstract}

\section{Introduction}

Throughout, given an $x \in \mathbb{R}$, we write $[x]$ for its integer part, $\{ x\}=x-[x]$ for its fractional part and $\|x\|$ for the distance of $x$ from the nearest integer. Throughout $\alpha,\gamma\in\R$ and $\alpha$ is irrational. The key object of interest of this paper is the following sum
\begin{equation}\label{T}
T_N(\alpha,\gamma):=\sum_{\substack{0\le n\le N\\ n\neq n'}}\frac{1}{\{n\alpha-\gamma\}}
\end{equation}
where $N\in\N$ and $0\le n'\le N$ is an integer such that
\begin{align}\label{n'}
\{ n'\alpha -\gamma \} = \min_{0\le n \le N} \{ n\alpha -\gamma \}\,.
\end{align}
Note that $n'$ depends on $\alpha$, $\gamma$ and $N$ and, since $\alpha$ is irrational, $n'$ is unique. One obvious reason for imposing the condition $n\neq n'$ in \eqref{T} is to avoid a vanishing denominator. However, even if $\{n'\alpha-\gamma\}$ does not vanish the term $\{n'\alpha-\gamma\}^{-1}$ cannot be bounded by a function of $N$ and $\alpha$. This is because, for any given $N$ and $\alpha$, the parameter $\gamma$ can be arbitrarily close to one of the points $\{n\alpha\}$ with $0\le n\le N$. Also in this paper, we shall consider the sums with $\{n\alpha-\gamma\}$ replaced by $\|n\alpha-\gamma\|$ and the more general expression $n^{a}\{n\alpha-\gamma\}^{b}$ -- see the last section.

There are various applications of bounds on $T_N(\alpha,\gamma)$, e.g. for counting lattice points in polygons, in the theory of uniform distribution, metric theory of Diophantine approximation and so on. The above sums have therefore been studied in depth for a long while and also recently, see, for example, \cite{Beck1994}, \cite{Beck2014}, \cite{Behnke}, \cite{BHV-MEM}, \cite{Chowla}, \cite{HardyLittlewood1922}, \cite{HardyLittlewood1922b}, \cite{HardyLittlewood1930}, \cite[pp.\,108-110]{Koksma}, \cite{KuipersNiederreiter1974}, \cite{Kruse}, \cite{Vaaler}, \cite{Schmidt1964}, \cite{SinaiUlcigrai2009}, \cite{Walfisz1}, \cite{Walfisz2}.

The purpose of this paper is to introduce yet another technique for studying the above sums and obtain new results as well as recover known bounds in a more precise form. A detailed overview of previously obtained bounds for the sums of reciprocals can be found in the recent paper \cite{BHV-MEM}.
The technique we propose makes use of the so-called Three Distance Theorem, see \S\ref{3d}. This theorem relies on the theory of continued fractions. The facts from the theory of continued fractions we use in this paper can be found in any textbook on the topic, for example, \cite{Khintchine}, \cite{lang} or \cite{sg}.

From now on, unless otherwise stated, $[a_0;a_1,a_2,\dots]$ will denote the simple continued fraction expansion of a fixed real irrational number $\alpha$, where the $a_k$ are partial quotients. Thus, $a_0\in\Z$, $a_1,a_2,\dots\in\N$ are the unique integers such that
$$
\alpha=a_0+\dfrac{1}{a_1+\dfrac{1}{a_2+\raisebox{-2ex}{$\ddots$}}}\,.
$$
Further, $p_k/q_k=[a_0;a_1,a_2,\dots, a_k]$ will denote the $k$th convergent to $\alpha$. Recall that the numbers $p_k$ and $q_k$ are coprime integers that satisfy the following recurrence relations:
\begin{equation}\label{q_k}
q_{k+1} = a_{k+1}q_k + q_{k-1},\qquad p_{k+1} = a_{k+1}p_k + p_{k-1}\qquad(k\ge0)
\end{equation}
with  $p_{0}=a_0$, $p_{-1}=1$, $q_0=1$, $q_{-1}=0$, and
\begin{equation}\label{vb100}
  q_{k+1}p_k-q_kp_{k+1}=(-1)^{k+1}\quad (k\ge-1).
\end{equation}
Our key result concerting $T_N(\alpha,\gamma)$ is the following.

\begin{theorem}\label{main3}
Let $N \in \mathbb{N}$, $\alpha \in \mathbb{R} \backslash \mathbb{Q}$, $p_k/q_k$ denote the convergents to $\alpha$ and $K=K(N,\alpha)$ be the largest integer satisfying $q_K \le N$. Further, let $\gamma \in \mathbb{R}$ and $n'\in[0,N]$ be an integer given by \eqref{n'}.
Then
\begin{equation}\label{e1}
T_N(\alpha,\gamma) ~<~ 4N\left( \log q_K +1\right)~+~2q_{K+1}\left( \log \left( \frac{N}{q_{K}}\right) +1 \right)
\end{equation}
and furthermore
\begin{equation}\label{e2}
\sum_{\substack{0\le n\le N\\ n\not\equiv n'\pmod{q_{K}}}} \frac{1}{\{ n \alpha -\gamma \}} < 4N\left( \log q_K +1\right)\,.
\end{equation}
\end{theorem}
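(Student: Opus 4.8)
The plan is to exploit the arithmetic structure of the points $\{n\alpha-\gamma\}$, $0\le n\le N$, through the continued fraction expansion of $\alpha$, by organising them according to their residue class modulo $q_K$. Writing each $n\in[0,N]$ as $n=c+jq_K$ with $0\le c<q_K$, the identity $q_K\alpha=p_K+(-1)^K\|q_K\alpha\|$ gives $\{(c+jq_K)\alpha-\gamma\}=\{\{c\alpha-\gamma\}+(-1)^K j\|q_K\alpha\|\}$; thus, for fixed $c$, the points run through an arithmetic progression of common difference $\|q_K\alpha\|$, while for fixed $j$ they form a rigid rotation, by $j\|q_K\alpha\|$, of the level-zero configuration $\{c\alpha-\gamma\}$, $0\le c<q_K$. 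The two scales governing these points are $\|q_K\alpha\|$ and $\|q_{K-1}\alpha\|$, and both come from the best-approximation property of convergents. Since $N<q_{K+1}$, every nonzero difference $n-n''$ of indices in $[0,N]$ satisfies $\|(n-n'')\alpha\|\ge\|q_K\alpha\|$, so all $N+1$ points are mutually $\|q_K\alpha\|$-separated on the circle; and for the $q_K$ level-zero points, whose indices differ by less than $q_K$, the stronger bound $\|(n-n'')\alpha\|\ge\|q_{K-1}\alpha\|$ holds, so these are $\|q_{K-1}\alpha\|$-separated.

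For \eqref{e2} I would fix a level $j$ and bound the contribution of the surviving points (those with $c\not\equiv n'$) at that level. Being a rotation of the level-zero configuration, these form at most $q_K$ points that are pairwise $\|q_{K-1}\alpha\|$-separated; sorting them as $0<z_1<z_2<\cdots$ and using $z_t\ge(t-1)\|q_{K-1}\alpha\|$ bounds the harmonic sum by $\|q_{K-1}\alpha\|^{-1}(\log q_K+1)$, once the leading point is under control. Since $\|q_{K-1}\alpha\|^{-1}<q_K+q_{K-1}<2q_K$ and the number of levels is at most $\lfloor N/q_K\rfloor+1<2N/q_K$ (here $q_K\le N$ is used), summing over $j$ produces
\[
\sum_{j}\ \|q_{K-1}\alpha\|^{-1}(\log q_K+1)\ <\ \frac{2N}{q_K}\cdot 2q_K(\log q_K+1)=4N(\log q_K+1),
\]
which is exactly \eqref{e2}. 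The role of the condition $n\not\equiv n'\pmod{q_K}$ is now visible: it deletes from every level the single point lying in the residue class of the global minimiser $n'$, and it is this deletion that is meant to keep the leading reciprocal $1/z_1$ at each level from spoiling the harmonic estimate.

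Passing from \eqref{e2} to \eqref{e1} then amounts to reinstating the deleted class. The points with $n\equiv n'\pmod{q_K}$ form, by the first paragraph, an arithmetic progression through $x_0:=\{n'\alpha-\gamma\}$ with common difference $\|q_K\alpha\|$; since $x_0$ is the global minimum, the terms other than $n'$ are at least $\|q_K\alpha\|,2\|q_K\alpha\|,\dots$ away, so their reciprocals sum to at most $\|q_K\alpha\|^{-1}(\log(N/q_K)+1)$, and $\|q_K\alpha\|^{-1}<q_{K+1}+q_K<2q_{K+1}$ yields the second term $2q_{K+1}(\log(N/q_K)+1)$ of \eqref{e1}. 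Any points of this class that wrap past $1$ lie in $(1/2,1)$ and contribute only a lower-order $O(N/q_K)$.

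The step I expect to be the main obstacle is the one glossed over above: controlling, at each level and after the deletion of the class of $n'$, the smallest surviving point $z_1$. The crude bound $z_1\ge\|q_K\alpha\|$ is too weak to survive summation over all levels when $q_{K+1}$ is large, so the argument must instead use that points closer to $0$ than $\|q_{K-1}\alpha\|$ are rare — at most one per level in the full configuration — and that the very smallest of them is $x_0$ itself, which has been removed. Making this precise, by counting the surviving points in $(0,\|q_{K-1}\alpha\|)$ through their $\|q_K\alpha\|$-separation together with the three-distance gap structure of the level-zero configuration, is where the bookkeeping concentrates, and it is exactly what is responsible for the clean constant $4$ in \eqref{e2}.
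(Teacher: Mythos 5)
Your decomposition by residue classes modulo $q_K$ is a genuinely different organisation from the paper's (which sorts all $N+1$ points globally and works with the runs of minimal gaps), and the part you carry out is fine: within a level the points are $\|q_{K-1}\alpha\|$-separated, and the harmonic sum over the terms $z_2,z_3,\dots$ at each level does give $4N(\log q_K+1)$ in total. But the step you yourself flag --- controlling $\sum_j 1/z_1^{(j)}$ over the smallest surviving point of each level --- is a genuine gap, and the resolution you sketch would not close it. Deleting the class of $n'$ removes one point per level, but not necessarily the smallest point of that level; after that deletion there can still be up to one surviving point per level inside $(0,\|q_{K-1}\alpha\|)$, i.e.\ up to about $N/q_K$ of them, and the only separation you have between them is the global $\|q_K\alpha\|$-separation. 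Sorting them and summing reciprocals then gives at best $\|q_K\alpha\|^{-1}(\log(N/q_K)+1)\approx 2q_{K+1}(\log(N/q_K)+1)$ --- that is, the \emph{second} term of \eqref{e1}, which when $a_{K+1}$ is large is vastly bigger than $4N(\log q_K+1)$. So this route cannot prove \eqref{e2}. The problem is compounded by the fact that your estimate for the $t\ge 2$ terms already saturates the entire $4N(\log q_K+1)$ budget, leaving no room for the $z_1$-terms even if they were only $O(N)$.

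The missing ingredient is a lower bound of the shape $z_1^{(j)}\ge \delta_B=|D_{K+1}|+(a_{K+1}-r)|D_K|$, equivalently the identification of $\min\{\|m\alpha\|: 1\le |m|\le N,\ q_K\nmid m\}$; this is exactly the quantitative content of the refined three distance theorem that the paper's proof uses. With it, $1/z_1^{(j)}\le ((a_{K+1}-r)|D_K|)^{-1}$ is of size roughly $q_K$ rather than $q_{K+1}$, and the sum over levels becomes $O(N)$. The paper obtains this (and the sharper statement that at most $H=[N/q_K]$ consecutive gaps of the sorted configuration equal the minimal gap $|D_K|$, whence the $j$th sorted value is at least $\lceil j/(H+1)\rceil(\delta_B+H\delta_A)$ with $\delta_B+H\delta_A\ge a_{K+1}|D_K|\ge a_{K+1}/(q_{K+1}+q_K)\ge (H+1)/(4N)$) from the full gap structure; your per-level sorting discards precisely the information about where the $\delta_B$- and $\delta_C$-gaps sit relative to the $\delta_A$-runs, which is what produces the constant $4$. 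Two secondary points: the wrapped points of the deleted class contribute up to about $3N/q_K$, which is not of lower order than the $4N$ part of the budget and must be absorbed explicitly; and when $r=a_{K+1}$ one has $K=k+1$ and $q_K>N/2$, so the level structure degenerates and that case needs a separate treatment, as in the paper.
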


\bigskip

\begin{remark}\label{rem2}
The sequence $(q_k)_{k\ge1}$ of the denominators of the convergents to $\alpha$ is the unique largest increasing sequence of positive integers such that $\|q_1\alpha\|>\|q_2\alpha\|>\|q_3\alpha\|>\dots$.
Since for any positive integer $q$ we have that $\|q(-\alpha)\|=\|q\alpha\|$, we have that $q_k(-\alpha)=q_k(\alpha)$ for all $\alpha$. Therefore  Theorem~\ref{main3} is applicable to $-\alpha$, resulting in the same bounds.
Further for any real number $x$ note that $\|x\|=\min(\{x\},\{-x\})$ and furthermore that $\tfrac12\le\max(\{x\},\{-x\})\le1$. Hence
\begin{equation}\label{TR0}
1<\frac{1}{\{x\}} + \frac{1}{\{-x\}} - \frac{1}{\lVert x\rVert} \le 2\,.
\end{equation}
We shall use the above observations to prove the following
\end{remark}

\bigskip

\begin{corollary}\label{main4}
Let $N$, $\alpha$, $p_k/q_k$ and $K$ be the same as in Theorem~\ref{main3}. Further, let $\gamma \in \mathbb{R}$ and $n^*\in[0,N]$ be an integer such that
\begin{align*}
\| n^*\alpha -\gamma \| = \min_{0\le n \le N}\big\| n\alpha -\gamma\|.
\end{align*}
Then
\begin{align}
\sum_{\substack{0\le n\le N\\ n\neq n^*}}
\frac{1}{\|n\alpha-\gamma\|}~&\le~ 8N( \log q_K +1)
+4q_{K+1}\left( \log \left( \frac{N}{q_{K}}\right) +2 \right).\label{vb1}
\end{align}
\end{corollary}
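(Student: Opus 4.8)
The plan is to reduce the norm sum in \eqref{vb1} to two copies of the sum $T_N$ governed by Theorem~\ref{main3} --- one attached to $\alpha$ and one to $-\alpha$ --- and then to show that the single ``uncancelled'' term this procedure leaves behind is bounded by $4q_{K+1}$. Writing $n'$ for the minimizer in \eqref{n'} attached to $(\alpha,\gamma)$ and $n''$ for the corresponding minimizer of $\{-n\alpha+\gamma\}=\{n(-\alpha)-(-\gamma)\}$ over $0\le n\le N$, I would apply Theorem~\ref{main3} twice: to $(\alpha,\gamma)$, bounding $T_N(\alpha,\gamma)=\sum_{n\ne n'}\{n\alpha-\gamma\}^{-1}$ by \eqref{e1}, and to $(-\alpha,-\gamma)$, bounding $T_N(-\alpha,-\gamma)=\sum_{n\ne n''}\{-n\alpha+\gamma\}^{-1}$ by the same expression. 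The second application is legitimate because, by Remark~\ref{rem2}, $-\alpha$ has the same convergent denominators as $\alpha$, so $K$, $q_K$ and $q_{K+1}$ are unchanged. Adding the two estimates gives $T_N(\alpha,\gamma)+T_N(-\alpha,-\gamma)<8N(\log q_K+1)+4q_{K+1}(\log(N/q_K)+1)$, which is the right-hand side of \eqref{vb1} short of a surplus $4q_{K+1}$.

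Next I would invoke \eqref{TR0}: for each $n\ne n^*$ one has $n\alpha-\gamma\notin\Z$ (otherwise that $n$ would be the unique norm-minimizer $n^*$), so the denominators do not vanish and $\|n\alpha-\gamma\|^{-1}<\{n\alpha-\gamma\}^{-1}+\{-n\alpha+\gamma\}^{-1}-1$. Summing over $0\le n\le N$, $n\ne n^*$, and discarding the resulting $-N$ bounds $\sum_{n\ne n^*}\|n\alpha-\gamma\|^{-1}$ by the two one-sided sums with $n^*$ deleted. The key combinatorial observation is that $n^*\in\{n',n''\}$: indeed $\|n^*\alpha-\gamma\|=\min_n\|n\alpha-\gamma\|=\min\bigl(\{n'\alpha-\gamma\},\,\{-n''\alpha+\gamma\}\bigr)$, so the norm-minimizer is whichever of $n'$, $n''$ realizes the smaller of these two quantities. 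Hence deleting $n^*$ already removes the smaller --- and genuinely unbounded --- of $\{n'\alpha-\gamma\}^{-1}$ and $\{-n''\alpha+\gamma\}^{-1}$ from the relevant one-sided sum. Re-indexing the remaining one-sided sum to delete $n'$ (resp.\ $n''$) instead of $n^*$ introduces only nonpositive corrections together with exactly one leftover term, namely the reciprocal of the \emph{larger} of $\{n'\alpha-\gamma\}$ and $\{-n''\alpha+\gamma\}$; everything else is absorbed into $T_N(\alpha,\gamma)+T_N(-\alpha,-\gamma)$.

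The main obstacle, and the final step, is to bound this leftover reciprocal by $4q_{K+1}$. For this I would note that $\{n'\alpha-\gamma\}+\{-n''\alpha+\gamma\}$ is exactly the length of the arc of $\R/\Z$ straddling $0$ between the points $\{n''\alpha-\gamma\}$ (just below $1$) and $\{n'\alpha-\gamma\}$ (just above $0$). This arc is at least as long as the circle-distance between those two points, which equals $\|(n'-n'')\alpha\|$; since $1\le|n'-n''|\le N$ and $N<q_{K+1}$ (as $K$ is largest with $q_K\le N$), the standard best-approximation fact $\min_{1\le m\le N}\|m\alpha\|=\|q_K\alpha\|>\frac{1}{q_K+q_{K+1}}>\frac{1}{2q_{K+1}}$ applies. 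Thus the arc, hence the sum $\{n'\alpha-\gamma\}+\{-n''\alpha+\gamma\}$, exceeds $\frac1{2q_{K+1}}$, so its larger summand exceeds $\frac1{4q_{K+1}}$ and the leftover reciprocal is $<4q_{K+1}$. Combining with the two-fold application of Theorem~\ref{main3} yields $\sum_{n\ne n^*}\|n\alpha-\gamma\|^{-1}<8N(\log q_K+1)+4q_{K+1}(\log(N/q_K)+1)+4q_{K+1}$, which is precisely \eqref{vb1}. The degenerate case $n'=n''$ only deletes terms and is therefore even simpler.
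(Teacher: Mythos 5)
Your proposal is correct and follows essentially the same route as the paper: reduce to two applications of Theorem~\ref{main3} (for $\alpha$ and $-\alpha$) via \eqref{TR0}, observe that $n^*\in\{n',n''\}$, and bound the single leftover term by $4q_{K+1}$ using the best-approximation property $\|(n'-n'')\alpha\|\ge\|q_K\alpha\|>1/(2q_{K+1})$. The only cosmetic difference is that you bound the leftover via the arc length $\{n'\alpha-\gamma\}+\{-(n''\alpha-\gamma)\}$, whereas the paper uses the triangle inequality to get $\max\{\|n'\alpha-\gamma\|,\|n''\alpha-\gamma\|\}\ge\tfrac12\|q_K\alpha\|$; these are equivalent.
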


\begin{proof}
Let $n'$ be the same as in Theorem~\ref{main3} and similarly let $n''$ be
the unique integer in $[0,N]$ such that
$\{ n''(-\alpha) +\gamma \} = \min_{0\le n \le N}\{n(-\alpha) +\gamma\}$.
Then, by Theorem~\ref{main3} and inequalities \eqref{TR0}, we have that
\begin{align}
\sum_{\substack{0\le n\le N \\[0.3ex] n\neq n',n''}} \frac{1}{\|n\alpha-\gamma\|}
&\le \nonumber
\sum_{\substack{0\le n\le N \\[0.3ex] n\neq n'}} \frac{1}{\{n\alpha-\gamma\}}+
\sum_{\substack{0\le n\le N \\[0.3ex] n\neq n''}} \frac{1}{\{-(n\alpha-\gamma)\}}\\
&\le 8N\left( \log q_K +1\right)~+~4q_{K+1}\left( \log \left( \frac{N}{q_{K}}\right) +1 \right)\,.\label{S_2+}
\end{align}
Note that $n^*$ is either $n'$ or $n''$. If $n'\neq n''$ then we also have that $\|n'\alpha-n''\alpha\|\ge\|q_K\alpha\|$ since $q_K$ is the largest best approximation $\le N$ and $1\le |n'-n''|\le N$. Therefore, the $\max\{\|n'\alpha-\gamma\|,\|n''\alpha-\gamma\|\}\ge\tfrac12\|q_K\alpha\|\ge1/(4q_{K+1})$. Hence
$$
\min\left\{\frac{1}{\|n'\alpha-\gamma\|},\frac{1}{\|n''\alpha-\gamma\|}\right\}\le 4q_{K+1}
$$
and combining this with \eqref{S_2+} completes the proof.
\end{proof}

\section{The Three Distance Theorem}\label{3d}

The main question we discuss in this section is the following: \emph{given an $N\in\N$ and $\alpha\in\R$, what can we say about the distribution of the points
\begin{equation}\label{eq1}
\{\alpha\},\{2\alpha\},\dots,\{N\alpha\}
\end{equation}
in the unit interval\/ $[0,1)$?} Equivalently this question can be posed using circle rotations on identifying $[0,1)$ with the unit circle in the usual way via the map $\alpha \mapsto e^{2\pi i \alpha}$. The following statement conjectured by Hugo Steinhaus is widely known as the three distance theorem or three gaps theorem.

\begin{theorem}[The Three Distance Theorem]
For any $\alpha\in\R\setminus\Q$ and any integer $N\ge 1$ the points \eqref{eq1} partition $[0,1]$ into $N+1$ intervals which lengths take at most 3 different values $\delta_A$, $\delta_B$ and $\delta_C$ with $\delta_C=\delta_A+\delta_B$.
\end{theorem}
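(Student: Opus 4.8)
The plan is to pass to the circle $\R/\Z$ and adjoin the point $x_0=\{0\cdot\alpha\}=0$ to the list \eqref{eq1}, so that $x_0,x_1,\dots,x_N$ (with $x_i=\{i\alpha\}$) are $N+1$ points, pairwise distinct because $\alpha$ is irrational, which cut the circle into $N+1$ arcs; the two arcs meeting $0$ correspond to the two ends of $[0,1]$. The entire argument is organised around two distinguished indices. Let $p\in\{1,\dots,N\}$ be the index for which $\{p\alpha\}$ is smallest (the first point clockwise from $0$) and $q\in\{1,\dots,N\}$ the index for which $\{q\alpha\}$ is largest (the last point before $0$), and put $\delta_A=\{p\alpha\}$ and $\delta_B=1-\{q\alpha\}$; equivalently $\delta_A$ and $\delta_B$ are the least positive values of $\{m\alpha\}$ and of $1-\{m\alpha\}$ over $1\le m\le N$. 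I will show that every arc has length $\delta_A$, $\delta_B$, or $\delta_A+\delta_B$, which is exactly the assertion with $\delta_C=\delta_A+\delta_B$.

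The first step is the enabling lemma $p+q>N$, which I would prove by contradiction: if $p+q\le N$ then $x_{p+q}$ is a legitimate point, and since $(p+q)\alpha\equiv\delta_A-\delta_B\pmod 1$ it equals either $\delta_A-\delta_B$ (when $\delta_A>\delta_B$), a positive value below $\delta_A$, or $1-(\delta_B-\delta_A)$, a value exceeding $\{q\alpha\}$; either outcome contradicts the minimality defining $p$ or $q$. The consequence I need is that $i+p\le N$ and $i-q\ge 0$ cannot hold simultaneously, since $i+p\le N$ forces $i\le N-p<q$. Now I identify, for each $i$, the point immediately clockwise from $x_i$: adding $p$ advances clockwise by $\delta_A$, and subtracting $q$ advances clockwise by $\delta_B$ (because $x_{i-q}\equiv x_i+(1-\{q\alpha\})\pmod1$). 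Thus the candidate right neighbour is $x_{i+p}$ when $i\le N-p$ and $x_{i-q}$ when $i\ge q$; by the lemma these ranges are disjoint, and the leftover indices $N-p<i<q$ are served by the combined step $x_{i+p-q}$, which sits at distance exactly $\delta_A+\delta_B$.

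The heart of the proof is checking that each candidate arc is genuinely empty. For a claimed $\delta_A$-arc $(x_i,x_{i+p})$ with $i\le N-p$, an intruder $x_j$ with positive offset $j-i>0$ would give a value $\{(j-i)\alpha\}\in(0,\delta_A)$, contradicting minimality of $\delta_A$; the delicate case is a negative offset $j=i-t$ with $1\le t\le i$, which I would dispose of by translating by $p$. Indeed $x_{i-t}\in(x_i,x_{i+p})$ forces $\{t\alpha\}>1-\delta_A$, whence $x_{t+p}=\{t\alpha\}+\delta_A-1$ lands in $(0,\delta_A)$, and since $t+p\le N$ (here the bound $t\le i\le N-p$ is essential) this again contradicts minimality of $\delta_A$. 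The $\delta_B$-arcs are handled symmetrically, and for the long arcs the same two manoeuvres, applied to an arc of length $\delta_A+\delta_B$, exclude intruders of either sign. Counting then closes the argument: assigning to each point the arc immediately clockwise of it is a bijection onto the $N+1$ arcs, yielding exactly $N-p+1$ arcs of length $\delta_A$, $N-q+1$ of length $\delta_B$, and $p+q-N-1$ of length $\delta_A+\delta_B$, which indeed sum to $N+1$.

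The main obstacle is precisely these cross-side emptiness arguments: an intruding point may approach an arc from the ``wrong'' side, and the only device I see for excluding it is the translation-by-$p$ (or $-q$) trick above, which turns it into a too-small residue contradicting the defining minimality — and this succeeds only because the range restrictions keep the translated index inside $\{1,\dots,N\}$. It is worth recording the continued-fraction content lurking here: $p$ and $q$ are best-approximation denominators (one of them is $q_K$), and the total-length identity $q\delta_A+p\delta_B=1$ that the count secretly encodes is the unimodular relation \eqref{vb100}. One could instead run the whole proof through the Ostrowski expansion of $N$ and the recurrences \eqref{q_k}, but the combinatorial route keeps the minimality principle in the foreground and avoids splitting into cases on the parity of $K$.
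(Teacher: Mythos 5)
Your proof is correct, but it takes a genuinely different route from the paper's. You define the two basic gaps intrinsically --- $\delta_A$ as the least value of $\{m\alpha\}$ and $\delta_B$ as the least value of $1-\{m\alpha\}$ over $1\le m\le N$ --- prove the key lemma $p+q>N$ by the sum-of-angles contradiction, identify the clockwise successor of $x_i$ as $x_{i+p}$, $x_{i-q}$ or $x_{i+p-q}$ according to which of the disjoint ranges $i\le N-p$, $i\ge q$, $N-p<i<q$ holds, and establish emptiness of each candidate arc by the translation trick; your counts $N-p+1$, $N-q+1$ and $p+q-N-1$ are correct, the last being nonnegative precisely by the lemma. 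This is the classical combinatorial proof, and the step you rightly single out as the crux (excluding intruders from the ``wrong'' side by shifting the offset by $p$ or $q$ back into $\{1,\dots,N\}$) does go through in all three cases. The paper instead proves the sharper Theorem~\ref{3gap}: it fixes $k$ by \eqref{x11}, writes $N=rq_k+q_{k-1}+s$, and constructs the successor permutation explicitly from the sets in \eqref{abcset}, verifying monotonicity via the total-length identity \eqref{one}. What that buys is exactly what the later sections consume: the continued-fraction expressions $\delta_A=|D_k|$ and $\delta_B=|D_{k+1}|+(a_{k+1}-r)|D_k|$, the exact multiplicities, and the arithmetic form of the successor map, all of which feed into the proof of Theorem~\ref{main3}. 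Your route buys a shorter, continued-fraction-free and parity-free proof of the statement as literally given (your $p$ and $q$ are $q_k$ and $q_{k-1}+rq_k$ in some order depending on the parity of $k$, and your identity $q\delta_A+p\delta_B=1$ is \eqref{one}, i.e.\ unimodularity \eqref{vb100}), but it would need those identifications added before it could substitute for Theorem~\ref{3gap} elsewhere in the paper. Two small points to record for completeness: the case $\delta_A=\delta_B$ in your lemma is excluded because it would force $(p+q)\alpha\in\Z$; and the assertion that ``adding $p$ advances clockwise by $\delta_A$'' tacitly uses $x_i+\delta_A<1$ for $1\le i\le N-p$, which follows from the same minimality argument, since otherwise $x_{i+p}$ would lie in $(0,\delta_A)$ with $i+p\le N$.
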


There are various generalisations of the above fact and several independent proofs, in particular in \cite{halton, mic, tony, slater, sos1, sos2, suranyi, swier}. Remarkably, the length of the gaps as well as the number of gaps of every length can be exactly specified using the continued fraction expansion of $\alpha$.
Within this paper we will use an even more refined statement which in addition specifies the order in which the intervals of each length appear. The theorem will require the theory of continued fractions.
Some basic notation and facts of this theory have already been recalled in the introduction above. In addition, for $k\ge0$, we define the following quantities, which measure how the $k$th convergent approximates $\alpha$:
\begin{equation}\label{D_k}
D_k=q_k\alpha-p_k.
\end{equation}
By \eqref{q_k} and the definition of $D_k$, we clearly have that
\begin{align}
a_{k+1}D_k = D_{k+1} - D_{k-1} \qquad (k\ge 1). \label{rel2}
\end{align}
The following well known statement, that can be found in \cite{lang}, reveals the approximation properties of the convergents to $\alpha$:

\begin{lemma}\label{cf}
The sequence of even convergents $p_{2k}/q_{2k}$ is strictly increasing, the sequence of odd convergents $p_{2k+1}/q_{2k+1}$ is strictly decreasing. Both converge to $\alpha$ and furthermore we have that
\begin{align}\label{cf1}
\frac{1}{2q_{k+1}} < \frac{1}{q_{k+1}+q_k} < \lvert D_k \rvert < \frac{1}{q_{k+1}}\,.
\end{align}
\end{lemma}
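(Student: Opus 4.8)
The plan is to reduce the whole lemma to the single quantities $D_k$ defined in \eqref{D_k} and to extract all four assertions (monotonicity of the two subsequences, convergence, and the two-sided estimate \eqref{cf1}) from two elementary identities. First I would pin down the sign of $D_k$. Writing $\alpha$ in terms of its complete quotient $\alpha_{k+1}=[a_{k+1};a_{k+2},\dots]>1$ via the standard relation $\alpha=(\alpha_{k+1}p_k+p_{k-1})/(\alpha_{k+1}q_k+q_{k-1})$ (itself proved by induction from \eqref{q_k}) and substituting into \eqref{D_k}, the numerator collapses, by \eqref{vb100} in the form $q_kp_{k-1}-p_kq_{k-1}=(-1)^k$, to give
\[
D_k=\frac{(-1)^k}{\alpha_{k+1}q_k+q_{k-1}}.
\]
In particular $D_k$ has sign $(-1)^k$, so that, since $p_k/q_k=\alpha-D_k/q_k$, the even convergents lie below $\alpha$ and the odd ones above it.

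Next I would establish the two governing identities. A direct substitution of \eqref{D_k} into \eqref{vb100} yields $q_{k+1}D_k-q_kD_{k+1}=(-1)^k$; combined with the sign pattern just found this becomes $q_{k+1}|D_k|+q_k|D_{k+1}|=1$, from which the upper bound $|D_k|<1/q_{k+1}$ in \eqref{cf1} is immediate. For the decreasing behaviour of $|D_k|$ I would invoke \eqref{rel2}: as $D_{k-1}$ and $D_{k+1}$ share the sign $(-1)^{k+1}$ opposite to that of $D_k$, the relation \eqref{rel2} reads $|D_{k-1}|=a_{k+1}|D_k|+|D_{k+1}|$, whence $|D_{k-1}|>|D_k|$ because $a_{k+1}\ge1$.

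With $|D_{k+1}|<|D_k|$ in hand, the lower bound follows by feeding it back into $q_{k+1}|D_k|+q_k|D_{k+1}|=1$: replacing $|D_{k+1}|$ by the larger $|D_k|$ gives $1<(q_{k+1}+q_k)|D_k|$, that is $|D_k|>1/(q_{k+1}+q_k)$, and the leftmost inequality of \eqref{cf1} is then just $q_k<q_{k+1}$, which is clear from \eqref{q_k} for $k\ge1$. Finally, monotonicity of the two subsequences of convergents follows by comparing consecutive even (respectively odd) terms through $p_k/q_k=\alpha-D_k/q_k$, using the sign pattern together with $|D_{k+2}|<|D_k|$ and $q_{k+2}>q_k$; and convergence follows from $|\alpha-p_k/q_k|=|D_k|/q_k<1/(q_kq_{k+1})\to0$, since \eqref{q_k} forces $q_k\to\infty$.

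The step I expect to carry the real weight is fixing the sign of $D_k$: both the monotonicity claim and the passage from the signed identities to the absolute-value identities rest on it, and it is the only place where one must reach beyond the bare recurrences \eqref{q_k}, \eqref{vb100}, \eqref{rel2}, either through the complete quotient $\alpha_{k+1}$ as above or by an induction whose base case $D_1<0$ secretly encodes $a_1=\lfloor1/\{\alpha\}\rfloor$. Once the signs are fixed, every remaining deduction is short and purely algebraic.
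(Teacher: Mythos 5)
Your proof is correct, and it is essentially the standard textbook argument; the paper itself gives no proof of this lemma, simply citing \cite{lang}, so there is nothing to compare against. Your derivation of the sign $D_k=(-1)^k/(\alpha_{k+1}q_k+q_{k-1})$ from the complete quotient, the identity $q_{k+1}|D_k|+q_k|D_{k+1}|=1$, and the feedback of $|D_{k+1}|<|D_k|$ into that identity all check out, and you rightly avoid the circularity of quoting \eqref{s} or \eqref{rel3}, which the paper derives \emph{from} this lemma. One pedantic point: the leftmost inequality of \eqref{cf1} needs $q_k<q_{k+1}$, which can fail for $k=0$ when $a_1=1$; you correctly flag that your argument covers $k\ge1$, and this is really a (harmless) imprecision in the lemma as stated rather than a gap in your proof.
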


Since $q_k\ge2$ for $k\ge2$, we have from the above lemma that $D_k=(-1)^k\|q_k\alpha\|$ for all $k\ge1$. Also we have that $D_k$ alternates the sign, that is
\begin{equation}\label{s}
  D_kD_{k+1}<0\qquad (k\ge0).
\end{equation}
In particular, in view of \eqref{rel2} and \eqref{s} we have that
\begin{align}
a_{k+1}\lvert D_{k} \rvert + \lvert D_{k+1} \rvert = \lvert D_{k-1} \rvert \qquad (k\ge 1). \label{rel3}
\end{align}
We are now ready to state the full version of the three distance theorem that we will use in this paper.

\begin{theorem}\label{3gap}
Let $\alpha\in\R\setminus\Q$, $[a_0;a_1,a_2,\dots]$ be the continued fraction expansion of $\alpha$, $p_k/q_k$ be the convergents to $\alpha$ and $D_k = q_k\alpha - p_k$.
Then for any $N\in\N$ there exists a unique integer $k\ge0$ such that
\begin{equation}\label{x11}
q_k+q_{k-1}\le N<q_{k+1}+q_k
\end{equation}
and unique integers $r$ and $s$ satisfying
\begin{equation}\label{vb2}
    N=rq_k+q_{k-1}+s,\qquad\enspace
1\le r\le a_{k+1},\qquad\enspace 0\le s\le q_k-1,
\end{equation}
such that the points $\{\alpha\},\{2\alpha\},\ldots,\{N\alpha\}$ split $[0,1]$ into $N+1$ intervals, namely
\begin{equation}\label{int}
J_0:=\big[0,\{n_1\alpha\}\big],\quad J_1:=\big[\{n_1\alpha\},\{n_2\alpha\}\big],~.~.~.~,\quad
J_N:=\big[\{n_N\alpha\},1\big]\,,
\end{equation}
of which
\begin{align}
&\text{$N_A:=N+1-q_k$   ~~~are of length $\delta_A:=|D_k|,$}\label{A}\\
&\text{$N_B:=s+1$ ~~~~~~~~~~are of  length $\delta_B:=|D_{k+1}|+(a_{k+1}-r)|D_k|,$}\label{B}\\
&\text{$N_C:=q_k-s-1$ ~~~~are of length $\delta_C:=\delta_A+\delta_B$.}\label{C}
\end{align}
Furthermore, the unique permutation $(n_1,\dots,n_N)$ of\/ $(1,\dots,N)$ such that
\begin{equation}\label{xx17}
0< \{n_1\alpha\}<\{n_2\alpha\}<\ldots<\{n_{N-1}\alpha\}<\{n_{N}\alpha\}<1
\end{equation}
can be found by setting $n_0:=0$ and then defining
\begin{equation}\label{n_i}
n_{i+1}:=n_i + \Delta_i\qquad\text{for $i=0,\dots,N-1$,}
\end{equation}
where\\[-6ex]
\begin{align}\label{deltacase}
\Delta_{i}:=\left\{\begin{array}{lcl}
                  (-1)^kq_k && \text{if }~n_i\in A\,, \\[0.5ex]
                  (-1)^{k-1}\big(q_{k-1}+rq_{k}\big) && \text{if }~n_i\in B\,, \\[0.5ex]
                  (-1)^{k-1}\big(q_{k-1}+(r-1)q_{k}\big) && \text{if }~n_i\in C
               \end{array}
\right.
\end{align}
and
\begin{align}\label{abcset}
\begin{array}{l}
A:=\big\{n\in\Z\cap[0,N]~:~0\le n+(-1)^kq_k\le N\big\}\,, \\[1ex]
B:=\big\{n\in\Z\cap[0,N]~:~0\le n+(-1)^{k-1}\big(q_{k-1}+rq_{k}\big)\le N\big\}\,,\\[1ex]
C:=\big\{n\in\Z\cap[0,N]~:~n \notin (A \cup B)\big\}
\end{array}
\end{align}
are disjoint subsets of integers; the length of the intervals \eqref{int} is given by
\begin{equation}\label{int2}
|J_i|=\left\{\begin{array}{ll}
                  \delta_A & \text{if }n_i\in A, \\[0ex]
                  \delta_B & \text{if }n_i\in B, \\[0ex]
                  \delta_C & \text{if }n_i\in C.
               \end{array}
\right.
\end{equation}
\end{theorem}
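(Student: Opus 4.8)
The plan is to derive the full statement from the classical picture of the two points of the orbit nearest to the origin, with every ingredient made explicit through the continued fraction data. Throughout I carry out the argument for even $k$, where $\{q_k\alpha\}=|D_k|$ lies just to the right of $0$; the odd case is identical once the signs $(-1)^k$ are reinstated exactly as in \eqref{deltacase}, reflecting that for odd $k$ the point $\{q_k\alpha\}=1-|D_k|$ lies just to the \emph{left} of $0$. First I would settle the arithmetic of \eqref{x11}--\eqref{vb2}: since by \eqref{q_k} we have $q_{k+1}+q_k=(a_{k+1}+1)q_k+q_{k-1}$, the intervals $[\,q_k+q_{k-1},\,q_{k+1}+q_k)$ are consecutive and cover all large integers, so a unique $k$ satisfies \eqref{x11}; dividing $N-q_{k-1}$ by $q_k$ produces $r,s$, and \eqref{x11} forces the ranges $1\le r\le a_{k+1}$, $0\le s\le q_k-1$.

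The heart of the matter is to locate the neighbours of $0$. Using \eqref{D_k}, the sign rule \eqref{s}, the relation \eqref{rel3} and the bounds \eqref{cf1} of Lemma~\ref{cf}, I would show that $u:=q_k$ realises the least positive value $\rho^+=\min_{1\le n\le N}\{n\alpha\}=|D_k|=\delta_A$, while $v:=q_{k-1}+rq_k$ realises the value nearest to $1$, lying at distance $\rho^-=|D_{k-1}|-r|D_k|=(a_{k+1}-r)|D_k|+|D_{k+1}|=\delta_B$ below it. The nontrivial content is the minimality of $\rho^+$ and $\rho^-$ over the \emph{whole} range $1\le n\le N$: this rests on the one-sided best-approximation behaviour of the convergents and crucially on the upper bound $N<q_{k+1}+q_k$ of \eqref{x11}, which keeps the next better one-sided approximant $q_k+q_{k+1}$, whose positive displacement $|D_k|-|D_{k+1}|$ would undercut $|D_k|$, out of range (and analogously for $\rho^-$). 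This minimality statement is the step I expect to be the main obstacle.

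Next I would record the separation $u+v=(r+1)q_k+q_{k-1}=N+(q_k-s)>N$ and then prove the crux: for every integer $n\in[0,N]$ the point of the orbit immediately to the right of $\{n\alpha\}$ is $\{(n+u)\alpha\}$ at distance $\rho^+$ when $n+u\le N$, is $\{(n-v)\alpha\}$ at distance $\rho^-$ when instead $n-v\ge0$, and is $\{(n+u-v)\alpha\}$ at distance $\rho^++\rho^-$ otherwise; here $u+v>N$ guarantees both that these cases are mutually exclusive and exhaustive and that $n+u-v\in[0,N]$ in the last. To see that the indicated point really is the right-neighbour one checks that nothing of the orbit lies strictly in the open arc to its right: an intruder would yield a positive rightward displacement smaller than $\rho^+$ or than $\rho^-$, which by the minimality just established can only arise from the shifts $+u$ or $-v$, and in each case the governing index constraint excludes it. This casework, and in particular the verification that the third alternative produces the composite gap $\rho^++\rho^-=\delta_C$, is where the bulk of the routine work sits.

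It then remains to read off the theorem. For even $k$ the three shifts are $u=(-1)^kq_k$, $-v=(-1)^{k-1}(q_{k-1}+rq_k)$ and $u-v=(-1)^{k-1}(q_{k-1}+(r-1)q_k)$, matching \eqref{deltacase}, and their domains of validity $\{n:n+u\le N\}$, $\{n:n-v\ge0\}$ and the complement are precisely the sets $A,B,C$ of \eqref{abcset}, the identity $\Delta_C=\Delta_A+\Delta_B$ being immediate. Hence iterating the neighbour map from $n_0=0$ via \eqref{n_i} produces exactly the increasing rearrangement \eqref{xx17}, the gap to the right of $\{n_i\alpha\}$ has length $\delta_A,\delta_B$ or $\delta_C$ according as $n_i\in A,B$ or $C$, which is \eqref{int2} together with \eqref{A}--\eqref{C}, and the multiplicities $N_A,N_B,N_C$ are just the cardinalities $|A|,|B|,|C|$, computed directly from \eqref{abcset} and \eqref{vb2}. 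The only genuine difficulty throughout is pinning down $\rho^\pm$ and their indices uniformly across the entire range of $N$ allowed by \eqref{x11}, where the continued fraction theory is indispensable; reinstating the parity signs $(-1)^k$ for odd $k$ is the unavoidable bookkeeping that the clean form of \eqref{deltacase} conceals.
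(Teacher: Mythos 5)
Your proposal is correct in outline but follows a genuinely different route from the paper. You take the classical path (essentially that of S\'os, Sur\'anyi and Ravenstein): identify $u=q_k$ and $v=q_{k-1}+rq_k$ as the indices realising the extremal displacements $\rho^+=\delta_A$ (least positive $\{n\alpha\}$) and $\rho^-=\delta_B$ (least distance below $1$), prove $u+v>N$, and then verify locally that the right-neighbour of each $\{n\alpha\}$ is obtained by the shift $+u$, $-v$ or $+u-v$ according to which of these keeps the index in $[0,N]$. The paper avoids the extremality lemma altogether: it defines the sequence $n_i$ directly by the recursion \eqref{n_i}--\eqref{deltacase}, checks by induction that $0\le n_i\le N$, computes each increment $\{(n_{i+1}-n_i)\alpha\}$ to be exactly $\delta_A$, $\delta_B$ or $\delta_C$ by a short continued-fraction manipulation, and then uses the identity $N_A\delta_A+N_B\delta_B+N_C\delta_C=1$ (a consequence of $q_{k+1}p_k-q_kp_{k+1}=(-1)^{k+1}$) to show that the partial sums of the gaps never reach $1$; hence the fractional parts increase strictly without wrap-around, the $n_i$ are pairwise distinct, and by pigeonhole they exhaust $\{0,\dots,N\}$, so no ``intruder'' ever needs to be excluded. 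Your approach buys a transparent geometric picture of the neighbour map at the price of the hardest lemma; the paper's buys a short, self-contained proof whose only continued-fraction inputs are the determinant identity and \eqref{cf1}.

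One caution about your sketch: the intruder-exclusion step needs more than the minimality of $\rho^+$ and $\rho^-$ separately. To rule out a point of the orbit strictly inside a gap of length $\rho^++\rho^-$ (and inside a $\rho^+$-gap in the regime $r=a_{k+1}$, where $\rho^-<\rho^+$) you must know that $u$ is the \emph{only} index $j\in[1,N]$ with $\{j\alpha\}<\rho^++\rho^-$ and that $v$ is the only one with $\{j\alpha\}>1-\rho^+-\rho^-$; minimality alone does not prevent a second, non-extremal index from landing inside a composite gap. This strengthened uniqueness statement is true and standard (it amounts to the next one-sided records lying outside $[1,N]$, which is exactly what the upper bounds $N<q_{k+1}+q_k$ and $s\le q_k-1$ guarantee), but it is precisely what your phrase ``by the minimality just established'' glosses over, and it would need to be stated and proved explicitly for the argument to close.
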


\bigskip

\begin{remark}
Strictly speaking Theorem~\ref{3gap} is not new and can be assembled from published results. The closest versions can be found in \cite{pb}. However, given that Theorem~\ref{3gap} underpins our approach, in \S\ref{3gapproof} we present a complete yet short proof of the result, which also makes this paper self-contained.
\end{remark}

\begin{remark}
Throughout this paper the length of the interval $J_i$ will be referred to as the $i$th gap. It is clear from \eqref{cf1} that $\vert D_{k+1} \rvert < \lvert D_k \rvert$. Hence, $\delta_B < \delta_A$ if and only if $r=a_{k+1}$, and if $r\not=a_{k+1}$, $\delta_A$ will be the smallest (of $\delta_A$, $\delta_B$ and $\delta_C$) gap.
\end{remark}

\begin{remark}
As $\alpha$ is irrational, $\delta_A$, $\delta_B$ and $\delta_C$ are distinct. In fact, while the two gaps $\delta_A$ and $\delta_B$ always appear, $\delta_C$-gaps exist if and only if $s<q_k -1$. There are infinitely many integers $N$ for which there are only two gaps. The structure and the transformation rules for partitioning intervals are studied in details in \cite{tony}. Other in-depth studies involving the two-gaps case can be found in \cite{chev1, chev2, chev3, tony2, sieg}.
\end{remark}

\subsection{Proof of Theorem~\ref{3gap}}\label{3gapproof}

Observe that $(q_k+q_{k-1})_{k\ge0}$ is a strictly increasing sequence of integers starting from $q_0+q_{-1}=1$, whence the existence of $k$ satisfying \eqref{x11} readily follows.
Next, observe that $q_k\le N-q_{k-1}<q_{k+1}+q_k-q_{k-1}=(a_{k+1}+1)q_k$.
Therefore, by division with remainder, $r$ and $s$ satisfying \eqref{vb2} exist and are unique.

Now we verify that $A\cap B=\varnothing$. Indeed, if there existed $n\in A\cap B$, then for even $k$ we would have that
$$
rq_k+q_{k-1} ~\stackrel{n\in B}{\rule{0ex}{2ex}\le} ~n ~\stackrel{n\in A}{\rule{0ex}{2ex}\le} ~ N-q_k~=~rq_k+q_{k-1}+s-q_k \stackrel{s<q_k}{\rule{0ex}{2ex}<} ~rq_k+q_{k-1}\,,
$$
while for odd $k$ we would have that
$$
q_k~\stackrel{n\in A}{\rule{0ex}{2ex}\le} ~ n ~ \stackrel{n\in B}{\rule{0ex}{2ex}\le}  ~N-(rq_k+q_{k-1}) ~ = ~ s~<~q_k.
$$
In both instances we would get a contradiction. In addition, note that $C$ is disjoint from $A$ and from $B$ simply by its definition.

Now we show that
\begin{equation}\label{x17+}
0\le n_{i}\le N\qquad\text{for all } \enspace0\le i\le N\,.
\end{equation}
We use induction on $i$. Clearly, $n_0=0$ satisfies the inequalities. Now assuming that $i<N$ and $0\le n_{i}\le N$ we shall verify that $0\le n_{i+1}\le N$. Indeed, if $n_i\in A\cup B$ this claim follows immediately from the definitions of $A$ and $B$ and \eqref{n_i}. Thus we only need to consider $n_i\in C$. First assume that $k$ is even. Then, since $n_i\not\in A$, we have that $n_i> N-q_k$ and therefore
\begin{align*}
n_{i+1}&\stackrel{\eqref{n_i}}{=}n_i-\big(q_{k-1}+(r-1)q_{k}\big)\ge N-q_k-\big(q_{k-1}+(r-1)q_{k}\big)\\[0.5ex]
&\stackrel{\eqref{vb2}}{=}rq_k+q_{k-1}+s-q_k-\big(q_{k-1}+(r-1)q_{k}\big)=s\ge0
\end{align*}
while clearly $n_{i+1}<n_i\le N$ as required.
Now assume that $k$ is odd. Then, since $n_i\not\in A$, we have that $n_i< q_k$ and therefore
$$
n_{i+1}\stackrel{\eqref{n_i}}{=}n_i+\big(q_{k-1}+(r-1)q_{k}\big)<
q_k+\big(q_{k-1}+(r-1)q_{k}\big)\stackrel{\eqref{vb2}}{=} N-s\le N
$$
while clearly $0\le n_i<n_{i+1}$. This completes the proof of \eqref{x17+}.
The disjointness of $A$, $B$ and $C$ together with \eqref{x17+} imply that $n_i$ given by \eqref{n_i} is well defined.

Next, by
\eqref{A}, \eqref{B}, \eqref{C} and \eqref{abcset}, we have that
\begin{align}\label{habc}
N_A = \# A, \quad N_B = \# B, \quad N_C = \# C
\quad\text{and}\quad
N_A + N_B +N_C = N+1.
\end{align}
Furthermore,
\begin{align}
N_A\delta_A+N_B\delta_B&+N_C\delta_C \ =\ (N-s)\delta_A+q_k\delta_B\nonumber \\[1ex]
&=\ (rq_k+q_{k-1})|D_k|+q_k(|D_{k+1}|+(a_{k+1}-r)|D_k|)\nonumber \\[0.5ex]
&=\ \big(a_{k+1}q_k+q_{k-1}\big)|D_k|+q_k|D_{k+1}|\ \stackrel{\eqref{q_k}}{=}\ q_{k+1}|D_k|+q_k|D_{k+1}| \nonumber \\[0.5ex]
&\stackrel{\eqref{s}}{ = }\ |q_{k+1}D_k-q_kD_{k+1}|
\ \stackrel{\eqref{D_k}}{=}\  |-q_{k+1}p_k+q_kp_{k+1}|\stackrel{\eqref{vb100}}{=}1\label{one}\,.
\end{align}
In particular, it means that $\delta_A$, $\delta_B$ and $\delta_C$ are all strictly less than $1$, except when $N=1$ in which case $N_C=0$ and $\delta_C=1$.

Next we will show that for $0\le i<N$
\begin{equation}\label{x17++}
\{(n_{i+1}-n_i)\alpha\}=\left\{\begin{array}{ll}
                  \delta_A & \text{if }n_i\in A, \\[0ex]
                  \delta_B & \text{if }n_i\in B, \\[0ex]
                  \delta_C & \text{if }n_i\in C.
               \end{array}
\right.
\end{equation}
We shall use \eqref{n_i} and \eqref{deltacase}. In the case $n_i\in A$ we have that
$$
\{(n_{i+1}-n_i)\alpha\}=\{(-1)^kq_k\alpha\}=\{(-1)^k(q_k\alpha-p_k)\}=\{|D_k|\}=\{\delta_A\}=\delta_A\,,
$$
since $0<\delta_A<1$.
In the case $n_i\in B$ we have that
\begin{align*}
\{(n_{i+1}-n_i)\alpha\}&=\{(-1)^{k-1}\big(q_{k-1}+rq_{k}\big)\alpha\}
\stackrel{\eqref{q_k}}{=}\{(-1)^{k-1}\big(q_{k+1}-(a_{k+1}-r)q_{k}\big)\alpha\}\\[0.5ex]
&=\{(-1)^{k-1}\big(q_{k+1}\alpha-p_{k+1}-(a_{k+1}-r)(q_{k}\alpha-p_k)\big)\}\\[0.5ex]
&=
\{|D_{k+1}|+(a_{k+1}-r)|D_{k}|\}=
\{\delta_B\}=\delta_B\,,
\end{align*}
since $0<\delta_B<1$.
Finally, in the case $n_i\in C$ we have that
\begin{align*}
\{(n_{i+1}-n_i)\alpha\}&=\{(-1)^{k-1}\big(q_{k-1}+(r-1)q_{k}\big)\alpha\}\\[0ex]
&\stackrel{\eqref{q_k}}{=}\{(-1)^{k-1}\big(q_{k+1}-(a_{k+1}+1-r)q_{k}\big)\alpha\}\\[0.5ex]
&=\{(-1)^{k-1}\big(q_{k+1}\alpha-p_{k+1}-(a_{k+1}+1-r)(q_{k}\alpha-p_k)\big)\}\\[0.5ex]
&=
\{|D_{k+1}|+(a_{k+1}+1-r)|D_{k}|\}=\{\delta_C\}=\delta_C\,,
\end{align*}
since in this case $N_C\neq0$ and so $0<\delta_C<1$.

\medskip

Now, we prove \eqref{xx17}. First of all note that $0<\{n_1\alpha\}$ since $\alpha$ is irrational. The proof continues by induction. Suppose that $1\le i<N$ and that
$
0<\{n_1\alpha\}<\dots<\{n_i\alpha\}\,.
$
This means that $n_0,n_1,\dots,n_i$ are all different. Therefore, by \eqref{x17++} and the disjointness of $A$, $B$ and $C$, we get that
\begin{equation}\label{eq18}
\sum_{j=0}^{i}\{(n_{j+1}-n_j)\alpha\}< N_A\delta_A+N_B\delta_B+N_C\delta_C\stackrel{\eqref{one}}{=}1\,.
\end{equation}
Therefore,
$$
\textstyle\sum_{j=0}^{i}\{(n_{j+1}-n_j)\alpha\}=\{\sum_{j=0}^{i}(n_{j+1}-n_j)\alpha\}=\{(n_{i+1}-n_0)\alpha\} =\{n_{i+1}\alpha\}\,.
$$
Similarly, $\sum_{j=0}^{i-1}\{(n_{j+1}-n_j)\alpha\}=\{n_i\alpha\}$ and thus
\begin{equation}\label{x+1+7}
\{n_{i+1}\alpha\}=\{(n_{i+1}-n_i)\alpha\}+\{n_i\alpha\}>\{n_i\alpha\}.
\end{equation}
This completes the proof of \eqref{xx17}.
Furthermore, since $n_0=0$, by \eqref{xx17} and\eqref{x17+}, we have that $(n_1,\dots,n_N)$ is the required permutation of $(1,\dots,N)$.

Finally, \eqref{x17++} and \eqref{x+1+7} verify \eqref{int2} for $0\le i\le N-1$, while the facts that  $n_0,\dots,n_N$ are all different and lie in the disjoint sets $A$, $B$ and $C$ put together with \eqref{one} implies \eqref{int2} for $i=N$. In turn, \eqref{A}, \eqref{B} and \eqref{C} are a consequence of \eqref{int2} and \eqref{habc}, namely that $N_A=\#A$, $N_B=\#B$, $N_C=\#C$. The proof of Theorem~\ref{3gap} is thus complete.\\[-2ex]
\hspace*{\fill}$\boxtimes$

\bigskip

\begin{remark}\label{rem7}
Since $0\le n_N\le N$, one can formally define $n_{N+1}$ using \eqref{n_i} and \eqref{deltacase}. This number is easy to compute since amongst $\Delta_0,\dots,\Delta_N$ there are clearly $N_A$ values of $(-1)^kq_k$, $N_B$ values of $(-1)^{k-1}\big(q_{k-1}+rq_{k}\big)$ and $N_C$ values of $(-1)^{k-1}\big(q_{k-1}+(r-1)q_{k}\big)$. Hence, $n_{N+1}$ is exactly
$$
(-1)^kq_kN_A+(-1)^{k-1}\big(q_{k-1}+rq_{k}\big)N_B+(-1)^{k-1}\big(q_{k-1}+(r-1)q_{k}\big)N_C\,.
$$
Substituting the values for $N_A$, $N_B$ and $N_C$ into the above expression and also taking into account that $N=rq_k+q_{k-1}+s$ one readily verifies that
$$
n_{N+1}=0\,.
$$
This means that the sequence $n_i$ can be continued for $i$ beyond $N$ and that it will always be well defined and cyclic. Furthermore it will satisfy the property that
$$
n_{i}=n_j\iff i\equiv j\pmod{N+1}\,.
$$
\end{remark}

\section{Proof of Theorem~\ref{main3}}

\subsection{Semi-homogeneous case}\label{coro2}

We begin by establishing Theorem~\ref{main3} in the case $\{n'\alpha-\gamma\}=0$. Without loss of generality we may assume that $\gamma=n'\alpha$.
In what follows we will use the following basic estimate
\begin{align}\label{log2}
\sum_{t=1}^{T}\frac{1}{t}\le1+\int_{1}^T\frac{dt}{t}=\log T+1\,.
\end{align}
Let $(n_i)_{i\ge 0}$ be the sequence defined by \eqref{n_i} and \eqref{deltacase} extended to all $i$ as described in Remark~\ref{rem7}. Then, there exists a unique integer $\ell\in[0,N]$ such that $n_{\ell}=n'$. By Theorem~\ref{3gap} and Remark~\ref{rem7}, $(n_{\ell},\dots,n_{\ell+N})$ is the permutation of integers $(0,\dots,N)$ such that
\begin{equation}\label{vb55}
0=\{(n_{\ell}-n')\alpha\}<\{(n_{\ell+1}-n')\alpha\}<\dots<\{(n_{\ell+N}-n')\alpha\}<1\,.
\end{equation}
Hence
\begin{equation}\label{sum3}
\sum_{\substack{0\le n\le N\\ n\neq n'}}\frac{1}{\{ (n-n') \alpha \}}=\sum_{j=1}^N\frac{1}{\{ (n_{\ell+j}-n') \alpha \}}\,,
\end{equation}
where the sum on the right of \eqref{sum3} is of decreasing terms. We shall be considering two cases depending on the relative size of $\delta_A$ and $\delta_B$.

\noindent \textbf{Case (i):} $\delta_A < \delta_B$. Note that this means that $r \neq a_{k+1}$ and that $K=k$, where $k$ is as in Theorem~\ref{3gap} and $K$ is as in Theorem~\ref{main3}. Given an index $j$ with $\ell\le j\le \ell+N$, let $h_j$ denote the largest integer such that $0\le h_j\le\max\{0,\ell+N-j\}$ and
\begin{equation}\label{vb101}
  \{(n_{j+i}-n')\alpha\}-\{(n_{j+i-1}-n')\alpha\}=\delta_A\quad\text{for all $i$ with $1\le i\le h_j$.}
\end{equation}
By \eqref{n_i} and \eqref{deltacase}, we have that $\{(n_{j+i}-n')\alpha\}-\{(n_{j+i-1}-n')\alpha\}=\delta_A$ if and only if
$n_{j+i}-n_{j+i-1}=(-1)^kq_k$. Hence $n_{j+h_j}=n_j+h_j(-1)^kq_k$. Now, since $0\le n_{j+h_j},n_{j}\le N$, we must have that
\begin{align}
h_j&=\frac{(-1)^k(n_{j+h_j}-n_j)}{q_k}=\frac{|n_{j+h_j}-n_j|}{q_k}\le \left[\frac{N}{q_k}\right]=:H\,.\label{h_j}
\end{align}
Hence, by \eqref{log2} and the fact that $\delta_A=|D_k|$, we have that the first $H$ terms on the right of \eqref{sum3} in total give us at most
\begin{align}\label{A1}
\sum_{j=1}^{H} \frac{1}{j\delta_A} ~\stackrel{}{<}~ \frac{1}{\lvert D_k \rvert} \left( \log H+1 \right)
~\stackrel{\eqref{cf1}}{<}~ 2q_{k+1}\left( \log \left( \frac{N}{q_k}\right) +1 \right)
\end{align}
which is the right most term in \eqref{e1}, since $k=K$.

Now consider the remaining terms of \eqref{sum3}: $\sum_{j=H+1}^N\{(n_{\ell+j}-n')\alpha\}^{-1}$.
By \eqref{h_j}, amongst any $H+1$ consecutive gaps in \eqref{vb55}, there will be at least one of length $\delta_B$ or $\delta_C$. Therefore, writing $j$ by division with remainder as $t(H+1)+u$ with $0\le u\le H$ we get that
\begin{align*}
\{ (n_{\ell+j}-n') \alpha \} =\sum_{i=1}^{j} \underbrace{ \Big( \{ (n_{\ell+i}-n') \alpha \} - \{ (n_{\ell+i-1}-n') \alpha \} \Big) }_{\mbox{gaps}}\ge t(\delta_B+H\delta_A).
\end{align*}
Observe that
$$
t=\left[\frac{j}{H+1}\right]\le \left[\frac{N}{[N/q_k]+1}\right]\le q_k
$$
and that $t\ge 1$ if and only if $j\ge H+1$.
Then
\begin{align}
\sum_{j=H+1}^N \frac{1}{\{ (n_{\ell+j}-n') \alpha \} } &\le \sum_{t=1}^{q_k} \sum_{u=0}^{H} \frac{1}{t(\delta_B+H\delta_A) }\le \frac{H+1}{\delta_B+H\delta_A}\sum_{t=1}^{q_k}\frac1t\,. \label{sumterms++}
\end{align}
Note that $r=[(N-q_{k-1})/q_k]\le H$. Then, using the explicit values for $\delta_A$ and $\delta_B$ given in Theorem~\ref{3gap} and inequalities \eqref{cf1}, we get that
\begin{equation}\label{l2}
\frac{H+1}{\delta_B+H\delta_A}\le \frac{H+1}{(a_{k+1}+H-r)|D_k|}\le
\frac{H+1}{a_{k+1}|D_k|}\le
\frac{(q_{k+1}+q_k)(H+1)}{a_{k+1}}.
\end{equation}
By \eqref{q_k}, we have that
\begin{equation}\label{l1}
\frac{q_{k+1}+q_k}{a_{k+1}}=q_k+\frac{1}{a_{k+1}}(q_k+q_{k-1})\,.
\end{equation}
Since $r\neq a_{k+1}$ we must have that $a_{k+1}\ge2$ and since $q_{k-1}<q_k$ we then conclude that \eqref{l1} is bounded above by $2q_k$. Putting this together with \eqref{l2} and the facts that $H\le N/q_k$ and $q_k\le N$ we get that
\begin{equation}\label{g1}
\frac{H+1}{\delta_B+H\delta_A}\le 2q_k(N/q_k+1)\le 4N\,.
\end{equation}
This together with \eqref{sumterms++} and \eqref{log2} gives the left most term on the right of \eqref{e1} and thus completes the proof of \eqref{e1}.

\bigskip

\noindent\textbf{Case (ii):} $\delta_A > \delta_B$. Note that this means that $r =a_{k+1}$ and that $K=k+1$.
Since in this case $\delta_B=|D_{k+1}|$ is the smallest gap, we have that
\begin{equation}\label{A2}
\frac{1}{\{ (n_{\ell+1}-n') \alpha \}}\le \frac{1}{\delta_B}\stackrel{\eqref{cf1}}{\le} 2q_{k+2}=2q_{K+1}\le 2q_{K+1}\left( \log \left( \frac{N}{q_{K}}\right) +1 \right)\,,
\end{equation}
which gives the right most term in \eqref{e1}.
Now consider the remaining terms of \eqref{sum3}, namely $\sum_{j=2}^N\{(n_{\ell+j}-n')\alpha\}^{-1}$.

Given an index $j$ with $\ell\le j\le \ell+N$, let $h_j$ denote the largest integer such that $0\le h_j\le\max\{0,\ell+N-j\}$ and
\begin{equation}\label{vb101ii}
  \{(n_{j+i}-n')\alpha\}-\{(n_{j+i-1}-n')\alpha\}=\delta_B\quad\text{for all $i$ with $1\le i\le h_j$.}
\end{equation}
By \eqref{n_i} and \eqref{deltacase}, we have that $\{(n_{j+i}-n')\alpha\}-\{(n_{j+i-1}-n')\alpha\}=\delta_B$ if and only if $n_{j+i}-n_{j+i-1}=(-1)^{k-1}\big(q_{k-1}+rq_{k}\big)$. Hence $n_{j+h_j}=n_j+h_j(-1)^{k-1}\big(q_{k-1}+rq_{k}\big)$. Now, since $0\le n_{j+h_j},n_{j}\le N$, we must have that
\begin{align}
h_j&=\frac{(-1)^k(n_{j+h_j}-n_j)}{q_{k-1}+rq_{k}}=\frac{|n_{j+h_j}-n_j|}{q_{k-1}+rq_{k}}\le \left[\frac{N}{q_{k-1}+rq_{k}}\right]\stackrel{\eqref{vb2}}{=}1\,.\label{h_jii}
\end{align}
Hence, there may not be two consecutive $\delta_B$-gaps in \eqref{vb55} and every other gap in there is of length at least $\delta_A$.
Therefore, writing $j$ by division with remainder as $2t+u$ with $0\le u\le 1$ we get that
\begin{align*}
\{ (n_{\ell+j}-n') \alpha \} =\sum_{i=1}^{j} \underbrace{ \Big( \{ (n_{\ell+i}-n') \alpha \} - \{ (n_{\ell+i-1}-n') \alpha \} \Big) }_{\mbox{gaps}}\ge t(\delta_B+\delta_A).
\end{align*}
Observe that
$$
t=\left[\frac{j}{2}\right]\le \left[\frac{N}{2}\right]\le \left[\frac{q_{k+1}+q_k}{2}\right] < q_{k+1}=q_K
$$
and that $t\ge 1$ if and only if $j\ge 2$.
Then
\begin{align}
\sum_{j=2}^N \frac{1}{\{ (n_{\ell+j}-n') \alpha \} } &\le \sum_{t=1}^{q_K} \sum_{u=0}^{1} \frac{1}{t(\delta_B+\delta_A) }\le \frac{2}{\delta_B+\delta_A}\sum_{t=1}^{q_K}\frac1t\,. \label{sumterms++ii}
\end{align}
Then, using the explicit values for $\delta_A$ and $\delta_B$ given in Theorem~\ref{3gap} and inequalities \eqref{cf1}, we get that
\begin{equation}\label{l2ii}
\frac{2}{\delta_B+\delta_A}\le \frac{2}{|D_k|+|D_{k+1}|}\le
\frac{2}{|D_k|}\stackrel{\eqref{cf1}}{\le} 2(q_{k+1}+q_k).
\end{equation}
Since $r=a_{k+1}$ we must have that $N=rq_k+q_{k-1}+s\ge q_{k+1}$.
Putting this together with \eqref{l2ii} we get that
\begin{equation}\label{g2}
\frac{2}{\delta_B+\delta_A}\le 4N\,.
\end{equation}
This together with \eqref{sumterms++ii} and \eqref{log2} gives the left most term on the right of \eqref{e1} and thus completes the proof of \eqref{e1} in Case (ii).

\bigskip

Finally, note that the condition $n\equiv n'\pmod{q_K}$ excludes the terms contributing to the right most term of \eqref{e1}. Hence \eqref{e2} follows and the proof of Theorem~\ref{main3} is complete in the semi-homogeneous case.

\subsection{The case of arbitrary $\gamma$}

Without loss of generality assume that $0<\gamma<1$ (the case when $\gamma=0$ is covered by the semi-homogeneous case). Then,
\begin{equation}\label{g}
\{n\alpha-\gamma\}=\left\{\begin{array}{ll}
                      \{n\alpha\}-\gamma &\text{if }\{n\alpha\}\ge\gamma\,,\\[1ex]
                      \{n\alpha\}-\gamma+1 &\text{if }\{n\alpha\}<\gamma\,.
                    \end{array}
\right.
\end{equation}

Let $\ell$ be the largest integer in the range $0\le \ell\le N$ such that $\{n_{\ell}\alpha\}<\gamma$. Since $\{0\alpha\}=0<\gamma$, $\ell$ does exist. First assume that $\ell<N$. Set $n'=n_{\ell+1}$. Then, if $\ell+1\le i\le N$ we get that
$$
\{n_i\alpha-\gamma\}\stackrel{\eqref{g}}{=}\{n_i\alpha\}-\gamma \ge \{n_i\alpha\}-\{n_{\ell+1}\alpha\}=\{(n_i-n')\alpha\}\,,
$$
while  if $0\le i\le \ell$ we get that
$$
\{n_i\alpha-\gamma\}\stackrel{\eqref{g}}{=}\{n_i\alpha\}-\gamma+1 \ge
                      \{n_i\alpha\}-\{n_{\ell+1}\alpha\}+1=\{(n_i-n')\alpha\}\,.
$$
Now assume that $\ell=N$. Define $n'=0$. Then for all $0\le i\le N$
$$
\{n_i\alpha-\gamma\}\stackrel{\eqref{g}}{=}\{n_i\alpha\}-\gamma+1 \ge \{n_i\alpha\}=\{(n_i-n')\alpha\}\,.
$$
In either case we have that $T_N(\alpha,\gamma)\le T_N(\alpha,\{n'\alpha\})$ with appropriately defined $n'$ and therefore we finish the proof of the general case of Theorem~\ref{main3} by appealing to the semi-homogeneous case that we have already considered in \S\ref{coro2}.

\section{Generalisations}

The proof of Theorem~\ref{main3} given above can be easily generalised to give a bound on the following more general sum:
$$
T^b_N(\alpha,\gamma):=\sum_{\substack{0\le n\le N\\ n\neq n'}}\frac{1}{\{n\alpha-\gamma\}^b}\,.
$$

\begin{theorem}\label{ab}
\samepage Let $N \in \mathbb{N}$, $\alpha \in \mathbb{R} \backslash \mathbb{Q}$, $p_k/q_k$ denote the convergents to $\alpha$ and $K=K(N,\alpha)$ be the largest integer satisfying $q_K \le N$. Further, let $\gamma \in \mathbb{R}$ and $n'\in[0,N]$ be an integer given by \eqref{n'}. Then
$$
T^b_N(\alpha,\gamma)~\le~ \left\{\begin{array}{ll}
2^{1+b}\zeta(b)N^b + 2^b\zeta(b)q_{K+1}^b&\text{if }b>1\\[3ex]
\frac{2^{1+b}}{1-b}N+\frac{2^b}{1-b}q_{K+1}^b(N/q_K)^{1-b}&\text{if }b<1\,,
                          \end{array}
                          \right.
$$
where $\zeta(b)=\sum_{j=1}^\infty j^{-b}$.
\end{theorem}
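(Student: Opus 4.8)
The plan is to run the proof of Theorem~\ref{main3} essentially line for line, replacing the single harmonic estimate \eqref{log2} by the two bounds
\[
\sum_{t=1}^{T}\frac{1}{t^{b}}\le\zeta(b)\quad(b>1),\qquad
\sum_{t=1}^{T}\frac{1}{t^{b}}\le\frac{T^{1-b}}{1-b}\quad(0<b<1),
\]
the second coming from $\sum_{t=1}^{T}t^{-b}\le 1+\int_{1}^{T}t^{-b}\,dt=1+\frac{T^{1-b}-1}{1-b}\le\frac{T^{1-b}}{1-b}$. First I would reduce to the semi-homogeneous case as in \S\ref{coro2}. That reduction rests solely on the pointwise inequality $\{n_{i}\alpha-\gamma\}\ge\{(n_{i}-n')\alpha\}$, and since $x\mapsto x^{-b}$ is decreasing on $(0,\infty)$, it immediately gives $T^{b}_{N}(\alpha,\gamma)\le T^{b}_{N}(\alpha,\{n'\alpha\})$ for the same $n'$. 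It therefore suffices to bound $\sum_{j=1}^{N}\{(n_{\ell+j}-n')\alpha\}^{-b}$, a sum of decreasing terms ordered by Theorem~\ref{3gap}, and I would split into Case~(i) $\delta_{A}<\delta_{B}$ (so $K=k$) and Case~(ii) $\delta_{A}>\delta_{B}$ (so $K=k+1$).

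The small-gap contribution yields the $q_{K+1}$-term. In Case~(i) the first $H=[N/q_{k}]$ terms obey $\{(n_{\ell+j}-n')\alpha\}\ge j\delta_{A}$, so they sum to at most $\delta_{A}^{-b}\sum_{j=1}^{H}j^{-b}$ with $\delta_{A}=|D_{k}|$ and $|D_{k}|^{-1}<2q_{k+1}$ by \eqref{cf1}. The $b>1$ estimate gives $\le 2^{b}\zeta(b)q_{k+1}^{b}=2^{b}\zeta(b)q_{K+1}^{b}$, and the $b<1$ estimate together with $H\le N/q_{k}$ gives $\le\frac{2^{b}}{1-b}q_{k+1}^{b}(N/q_{k})^{1-b}$; these are exactly the stated second terms. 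In Case~(ii) the single term
\[
\{(n_{\ell+1}-n')\alpha\}^{-b}\le\delta_{B}^{-b}=|D_{k+1}|^{-b}<(2q_{k+2})^{b}=2^{b}q_{K+1}^{b}
\]
suffices, and $2^{b}q_{K+1}^{b}$ is dominated by the stated second term in each regime, using $\zeta(b)\ge1$ when $b>1$ and $(N/q_{K})^{1-b}/(1-b)\ge1$ when $b<1$.

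The bulk contribution yields the $N$-term, and the combinatorial grouping of \S\ref{coro2} is reused unchanged. In Case~(i), writing $j=t(H+1)+u$ gives $\{(n_{\ell+j}-n')\alpha\}\ge t(\delta_{B}+H\delta_{A})$, so
\[
\sum_{j=H+1}^{N}\frac{1}{\{(n_{\ell+j}-n')\alpha\}^{b}}
\le\frac{H+1}{(\delta_{B}+H\delta_{A})^{b}}\sum_{t=1}^{q_{k}}\frac{1}{t^{b}}.
\]
The one new scalar estimate is $\frac{H+1}{(\delta_{B}+H\delta_{A})^{b}}\le 2^{1+b}Nq_{k}^{\,b-1}$, proved by the chain already used for \eqref{l2}--\eqref{g1}: $\delta_{B}+H\delta_{A}\ge a_{k+1}|D_{k}|$, $|D_{k}|^{-1}<q_{k+1}+q_{k}$, $(q_{k+1}+q_{k})/a_{k+1}\le 2q_{k}$ and $H+1\le 2N/q_{k}$. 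Inserting the two harmonic estimates makes the powers of $q_{k}$ cancel: for $b>1$ one gets $\le2^{1+b}Nq_{k}^{\,b-1}\zeta(b)\le2^{1+b}\zeta(b)N^{b}$ since $q_{k}\le N$, and for $b<1$ one gets $\le2^{1+b}Nq_{k}^{\,b-1}\cdot\frac{q_{k}^{\,1-b}}{1-b}=\frac{2^{1+b}}{1-b}N$. Case~(ii) is the same with $H$ replaced by $1$: here $\{(n_{\ell+j}-n')\alpha\}\ge t(\delta_{A}+\delta_{B})$ for $j=2t+u$, and $\frac{2}{(\delta_{A}+\delta_{B})^{b}}<2(q_{k+1}+q_{k})^{b}\le 2^{1+b}N^{b}$ because $N\ge q_{k+1}=q_{K}$; summing $t^{-b}$ over $t\le q_{K}$ then gives $2^{1+b}\zeta(b)N^{b}$ for $b>1$ and $\frac{2^{1+b}}{1-b}N^{b}q_{K}^{\,1-b}\le\frac{2^{1+b}}{1-b}N$ for $b<1$ (again using $q_{K}\le N$). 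Adding the two contributions gives both bounds.

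The work here is bookkeeping rather than conceptual, and the only point that must be watched carefully is the cancellation in the bulk term: it is engineered so that the exponent of $q_{k}$ (resp.\ $q_{K}$) coming from the scalar factor $q_{k}^{\,b-1}$ is exactly killed by the exponent $1-b$ produced by the truncated zeta sum, after which $q_{k}\le N$ (resp.\ $q_{K}\le N$) converts the residual power of $q$ into a power of $N$. Obtaining the precise constants $2^{1+b}$ and $2^{b}$ depends on using the sharp form $|D_{k}|^{-1}<q_{k+1}+q_{k}$ in the bulk and the weaker $|D_{k}|^{-1}<2q_{k+1}$ in the small-gap term, and on absorbing the single Case~(ii) term into the second (not the first) summand. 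No genuinely new idea beyond splitting the one harmonic estimate into two is required.
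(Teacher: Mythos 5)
Your proposal is correct and follows essentially the same route as the paper: reduce to the semi-homogeneous case, keep the three-distance grouping from the proof of Theorem~\ref{main3}, and substitute the two truncated power-sum bounds for \eqref{log2} in the same two cases. One small point in your favour: in the bulk term of Case~(i) for $b>1$ your chain $\frac{H+1}{(\delta_B+H\delta_A)^b}\le (H+1)(2q_k)^b\le 2^{1+b}Nq_k^{b-1}$ actually delivers the stated constant $2^{1+b}\zeta(b)N^b$, whereas the paper's own computation at that step ends with $4^b\zeta(b)N^b$, which for $b>1$ exceeds the constant claimed in the theorem.
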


\begin{proof}
The arguments are similar to the proof of Theorem~\ref{main3}. We therefore only describe the relevant changes in the calculations made in the proof of Theorem~\ref{main3}. To begin with, note that the general case is again reduced to the semi-homogeneous case. First, assume that $b>1$. Then, in \textbf{Case~(i)}, inequalities \eqref{A1} will be replaced by
\begin{align*}
\sum_{j=1}^H \frac{1}{\{ (n_{\ell+j}-n') \alpha \}^b } \le \sum_{j=1}^{H} \frac{1}{(j\delta_A)^b} ~\stackrel{}{<}~ \frac{1}{|D_k|^b}\sum_{j=1}^Hj^{-b}<\zeta(b)(2q_{K+1})^b=2^b\zeta(b)q_{K+1}^b\,,
\end{align*}
while \eqref{sumterms++} will be replaced by
\begin{align*}
\sum_{j=H+1}^N \frac{1}{\{ (n_{\ell+j}-n') \alpha \}^b } &\le \sum_{t=1}^{q_k} \sum_{u=0}^{H} \frac{1}{(t(\delta_B+H\delta_A))^b }\le \zeta(b)\frac{H+1}{(\delta_B+H\delta_A)^b}\\[1ex]
&= \zeta(b)\left(\frac{H+1}{(\delta_B+H\delta_A)}\right)^b(H+1)^{1-b}\\[2ex]
&\stackrel{\eqref{g1}}{\le} \zeta(b)(4N)^b(H+1)^{1-b}\\[2ex]
&\stackrel{(*)}{\le} \zeta(b)(4N)^b(N/q_K)^{1-b}\\[2ex]
&=4^b\zeta(b)Nq_K^{b-1}~\stackrel{q_K\le N}{\le}~ 4^{b}\zeta(b)N^b\,,
\end{align*}
where $(*)$ is due to the inequalities $b>1$ and $H+1=[N/q_K]+1\ge N/q_K$.

\bigskip

In \textbf{Case~(ii)}, \eqref{A2} will be replaced by
$$
\frac{1}{\{ (n_{\ell+1}-n') \alpha \}^b}\le (2q_{K+1})^b\le 2^b\zeta(b)q_{K+1}^b\,,
$$
since $\zeta(b)>1$, while \eqref{sumterms++ii} will be replaced by
\begin{align*}
\sum_{j=2}^N \frac{1}{\{ (n_{\ell+j}-n') \alpha \} } & \le \sum_{t=1}^{q_K} \sum_{u=0}^{1} \frac{1}{(t(\delta_B+\delta_A))^b }\le \zeta(b)\frac{2}{(\delta_B+\delta_A)^b}\\[1ex]
& =2^{1-b}\zeta(b)\left(\frac{2}{\delta_B+\delta_A}\right)^b~\stackrel{\eqref{g2}}{\le}~
2^{1-b}\zeta(b)\left(4N\right)^b=2^{1+b}\zeta(b)N^b\,.
\end{align*}

Now assume that $b<1$. In this case we will use the following inequality instead of \eqref{log2}
$$
\sum_{t=1}^{T}\frac{1}{t^b}\le1+\int_{1}^T\frac{dt}{t^b}=\frac{T^{1-b}-b}{1-b}\le \frac{T^{1-b}}{1-b}\,.
$$
Then, in \textbf{Case~(i)}, \eqref{A1} will be replaced by
\begin{align*}
\sum_{j=1}^H \frac{1}{\{ (n_{\ell+j}-n') \alpha \}^b } \le \sum_{j=1}^{H} \frac{1}{(j\delta_A)^b} ~\stackrel{}{=}~ \frac{1}{|D_k|^b}\sum_{j=1}^Hj^{-b}<\frac{1}{1-b}(2q_{K+1})^b(N/q_K)^{1-b}\,,
\end{align*}
while \eqref{sumterms++} will be replaced by
\begin{align*}
\sum_{j=H+1}^N \frac{1}{\{ (n_{\ell+j}-n') \alpha \}^b } &\le \sum_{t=1}^{q_k} \sum_{u=0}^{H} \frac{1}{(t(\delta_B+H\delta_A))^b }\\[1ex]
&\le \left(\frac{H+1}{(\delta_B+H\delta_A)}\right)^b(H+1)^{1-b}\frac{q_K^{1-b}}{1-b}\\[1ex]
&\le (4N)^b(2N/q_K)^{1-b}\frac{q_K^{1-b}}{1-b}~=~ \frac{2^{1+b}}{1-b}N\,.
\end{align*}
In \textbf{Case~(ii)}, \eqref{A2} will be replaced by
$$
\frac{1}{\{ (n_{\ell+1}-n') \alpha \}^b}\le (2q_{K+1})^b\le \frac{1}{1-b}(2q_{K+1})^b(N/q_K)^{1-b}\,,
$$
while \eqref{sumterms++ii} will be replaced by
\begin{align*}
\sum_{j=2}^N \frac{1}{\{ (n_{\ell+j}-n') \alpha \} } & \le \sum_{t=1}^{q_K} \sum_{u=0}^{1} \frac{1}{(t(\delta_B+\delta_A))^b }\le 2^{1-b}\left(\frac{2}{\delta_B+\delta_A}\right)^b\frac{q_K^{1-b}}{1-b}\\[1ex]
& \le 2^{1-b}(4N)^b\frac{q_K^{1-b}}{1-b}=\frac{2^{1+b}}{1-b}N^bq_K^{1-b}\le
\frac{2^{1+b}}{1-b}N\,.
\end{align*}
\end{proof}

\begin{remark}
Using the same argument as in Corollary~\ref{main4} one can easily get an analogue of Theorem~\ref{ab} for sums that involve the distance to the nearest integer. Finally, note that using the partial summation formula (see for example \cite{BHV-MEM}) an interested reader can readily deduce upper bounds for the even more general sums of the form
$$
\sum\frac{1}{n^a\{n\alpha-\gamma\}^b}\qquad\text{and}\qquad\sum\frac{1}{n^a\|n\alpha-\gamma\|^b}\,,
$$
where $a\ge0$ and $b>0$.
\end{remark}

\end{document}